\documentclass[12pt,oneside,a4paper]{article}
\usepackage{fancyhdr, ifpdf}
\usepackage{syntonly}
\usepackage{geometry}
\geometry{ hmargin=2.5cm, vmargin=2.0cm }
\usepackage{makeidx}
\makeindex
\usepackage{setspace}

\onehalfspace
\author{Safari Mukeru \footnote{Department of Decision Sciences, School of Economic Sciences, College of Economic and Management Sciences, University of South Africa, Muckleneuk Campus, P. O. Box 392, Pretoria, 0003. South Africa. {\it email: Mukers@unisa.ac.za}}} 

\title{Hausdorff dimension and capacities of compact sets} 
\date{}

\begin{document}
\vspace{6in}
\maketitle

\pagenumbering{arabic}

\abstract{We give a complete proof of the expression of capacities of a measure in terms of its Fourier transform}

\paragraph{Keywords:} Fourier transform, Capacity, measure, Hausdorff dimension, Fourier dimension. 

\newtheorem{theorem}{Theorem}
\newtheorem{lemma}[theorem]{Lemma}
\newtheorem{proposition}[theorem]{Proposition}
\newtheorem{corollary}[theorem]{Corollary}
\newtheorem{definition}[theorem]{Definition}
\newtheorem{example}[theorem]{Example}
\newtheorem{remark}[theorem]{Remark}
\newenvironment{proof}[1][Proof]{\begin{trivlist}
\item[\hskip \labelsep {\bfseries #1}]}{\end{trivlist}}

\newcommand{\qed}{\nobreak \ifvmode \relax \else
      \ifdim\lastskip<1.5em \hskip-\lastskip
      \hskip1.5em plus0em minus0.5em \fi \nobreak
      \vrule height0.30em width0.4em depth0.25em\fi}

\section{Introduction}

Given a Radon finite measure $\mu$ supported by a compact subset $E$ of $\mathbf{R}^d$ and a real number $\alpha$ such that $0 < \alpha < d$ the energy integral of $\mu$ with respect to the kernel $k(x)= \vert x \vert ^{-\alpha}$ (or simply the $\alpha-$energy of $\mu$) is given by 
\begin{eqnarray} \label{energyy}
I_\alpha(\mu) = \int\int \frac{d\mu(x) d\mu(y)}{\vert x-y\vert^\alpha}.\end{eqnarray}
The measure $\mu$ is said to have finite energy with respect to $k$ if $I_\alpha(\mu) < \infty$.  The set $E$ has positive capacity with respect to $k$ and write $\mbox{Cap}_\alpha(E) > 0$ if $E$ carries a non-zero Radon measure of finite energy with respect to $k$. If there is no such measure $E$ is said to have zero capacity with respect to this kernel and we write $\mbox{Cap}_\alpha(E) = 0$. 
By the celebrated Frostman theorem \cite{Frostman}, the Hausdorff dimension of $E$ is equal to 
$\sup\{\alpha: \mbox{Cap}_\alpha(E) > 0\} = \inf\{\beta: \mbox{Cap}_\beta (E) = 0 \}$. To show that a given compact subset $E$ has Hausdorff dimension $\geq \alpha$, it is sufficient to construct a non-zero finite Radon measure $\mu$ such that $I_\alpha(\mu) < \infty$. That is why capacities are very important in fractal geometry. 
 (An interested reader can find more details on capacities and fractal geometry in the books by Kahane \cite{Kahane} and by Mattila \cite{Mattila}).  
It is known that, the $\alpha-$energy of $\mu$ is also given by 
 \begin{eqnarray} \label{capac_four_formula}
 I_\alpha(\mu) = \frac{1}{(2\pi)^n}\int \hat k(u) \vert \hat \mu(u) \vert ^2 du,
\end{eqnarray}
where $\hat \mu$ is the Fourier transform of $\mu$ and $\hat k$ the Fourier transform of the kernel $k$ in a sense to be precise later on. This formula is a corner stone in the Fourier analysis of fractal properties of sets because in some sense it is easier to apply than computing the double integral in (\ref{energyy}). It is also the only one formula known to the author that relates directly the Fourier transform of a measure on a fractal set to the Fourier transform of the kernel $k$. It is well known that if $E$ is a compact subset of $[0, 1]$ of Hausdorff dimension $\alpha$, and if  $\mu$ is a probability measure whose 
support is contained in $E$, then the function $\vert u \vert ^\beta \hat \mu(u)$ is unbounded for any $\beta > \alpha/2$. This is result is based on relation (\ref{capac_four_formula}).

However in the literature, there is no complete proof of this result known to the author. The most cited proof is given in the book by Carelson \cite[pp 22-23]{Carleson} and contains just some few lines. The proof by Mattila in his book \cite[pp162-163]{Mattila} , even though more detailed than the one by Carleson, contains also many gaps and exercises that are not obvious to fill. Due to the importance of this formula we have decided in this paper to improve and complete Mattila's proof by giving more precision on the involved constants and filling all the gaps. The construction will be used to study (in collaboration with Fouch\'e) the Hausdorff dimension of images of compact subsets by algorithmically Brownian motion. 

We start by giving, in Section 2, some basics on Fourier transform of functions and tempered distribution in order to calculate the Fourier transform of the kernel $k$. This is borrowed in the book by Strichartz \cite{Strichartz}. Next, in Section 3, we discuss Fourier transform of measures and convolution products. The proof itself is given in Section 4.

\section{Fourier transform  of integrable functions}
Given a function function $f \in L^{1}(\mathbf{R^n})$, its Fourier transform is the function defined by
\begin{eqnarray} \label{four_trans_funct}
\hat f(u) = \int e^{iux} f(x)dx, \,\, u \in \mathbf{R}^n.
\end{eqnarray}
The function $\hat f$ is continuous.

If $\hat f$ is also summable, that is, $\hat f \in L^{1}(\mathbf{R^n})$, then we have the following Fourier inversion formula (\cite{Rudin}, p 185): 
\begin{eqnarray} \label{inverse_four_trans}
f(x) = \frac{1}{(2\pi)^n}\int e^{-iux} \hat f(u)du, \, \mbox{ almost everywhere in } \mathbf{R}^n.
\end{eqnarray}
If $f_1, \, f_2 \in L^{1}(\mathbf{R^n})$ and their Fourier transforms $\hat f_1, \hat f_2$ belong to $L^{2}(\mathbf{R^n})$, then we have the formula (\cite{Rudin}, p 187):
\begin{eqnarray} \label{parseval_formula}
\frac{1}{(2\pi)^n}\int \hat f_1(u) \overline{\hat f_2(u)}du =  \int f_1(x) \overline{f_2(x)} dx. 
\end{eqnarray}
In particular, 
$$\frac{1}{(2\pi)^n}\int \vert \hat f_1(u)\vert^2 du =  \int \vert f_1(x) \vert ^2 dx.$$
The Fourier transform of the Gaussian function $$f(x) = e^{-s\vert x \vert^2}, \,\, s > 0$$ is given by (see for example \cite{Strichartz}, pp 38-41)
\begin{eqnarray} \label{Four_trand_Gauss}
\hat f(u) = (\pi/s)^{n/2} e^{-\vert u \vert^2/4s}
\end{eqnarray}
 and will be used in the sequel.

\section{Fourier transform of tempered distributions}
We will need Fourier transforms of functions that are not necessarily in $L^{1}(\mathbf{R^n})$ but are locally integrable. The usual way to define their Fourier transforms is to consider them as tempered distributions. \\
Let us introduce the following common notations:
For any $\alpha = (\alpha_1, \ldots, \alpha_n) \in \mathbf{N}^n$ and $x = (x_1, \ldots, x_n) \in \mathbf{R}^n$, 
\begin{eqnarray*}
\vert \alpha\vert &=& \alpha_1 + \ldots+ \alpha_n \\
x^\alpha &=& x_1^{\alpha_1}\ldots x_n^{\alpha_n}\\
\partial^\alpha& = &\frac{\partial^{\vert \alpha\vert}}{\partial^{\alpha_1}\ldots \partial^{\alpha_n}}.
\end{eqnarray*}
Consider an open subset $W$ of $\mathbf{R}^n$ and the linear space $C^\infty_0(W)$ of $C^\infty$-functions defined on $W$ having compact support. This space can be endowed by the structure of locally convex topological space as follows (\cite{Al-Gwaiz}, pp 24-25):
\begin{enumerate}
\item Write $$C^\infty_0(W) = \cup_{K \in \mathcal{K}} C^\infty_K(W)$$ where $\mathcal{K}$ is the class of all compact subsets of $W$, and $C^\infty_K(W)$ the subset of $C^\infty_0(W)$ whose elements have support in $K$. 
\item Endow $C^\infty_K(W)$ with the topology defined by the family of norms
  $$p_i(\phi) = \sup\{ \vert\partial ^\alpha \phi(x)\vert: x \in K, \vert \alpha \vert \leq i\}, \, i \in \mathbf{N}$$
that is, consider the neighborhood system of zero to be the family of balls
$$B_i(r) = \{\phi \in C^\infty_K(W): p_i(\phi) < r\}, \, r > 0 \,\mbox{ and } i \in \mathbf{N}.$$    
\item Endow $C^\infty_0(W)$ with the inductive limit topology of the topologies on the $C^\infty_K(W)$'s, that is, the neighborhood system of zero is the class of subsets $U \in C^\infty_0(W)$ such that $U \cap C^\infty_K(W)$ is a neighborhood of zero in $C^\infty_K(W)$ and that $U$ is convex and balanced in the sense that, for any $f \in U$ and $\lambda \in \mathbf{C}$ such that $\vert \lambda \vert \leq 1$, $\lambda f \in U$ holds.
\end{enumerate}
The space $C^\infty_0(W)$ endowed with this topology is denoted $\mathcal{D}(W)$. 
\begin{definition}
A distribution on $W$ is a continuous linear functional on $\mathcal{D}(W)$.
\end{definition}
The set of distributions on $W$ is denoted $\mathcal{D}'(W)$. For any locally integrable function $f$ on $W$, the linear functional 
$$\langle f, \phi\rangle =  \int_W f(x) \phi(x)dx,\,\, \phi \in \mathcal{D}(R^n),$$ is a distribution on $W$. Another important example of distribution is the classical Dirac distribution $\delta_x, \, x \in \mathbf{R^n}$ defined by
$$ \langle\delta_x, \phi \rangle = \phi(x), \, \phi \in \mathcal{D}(R^n).$$ We will simply denote $\delta_0$ as $\delta$. Any Borel measure $\mu$ with compact support defined on $\mathbf{R^n}$ induces a distribution on $\mathbf{R}$ by
$$\langle \mu, \phi \rangle = \int \phi(x) d\mu(x), \,\phi \in \mathcal{D}(R^n) $$
  \begin{definition}
 A sequence $(T_n)$ in $\mathcal{D}'(W)$ converges (weakly) to $T$ if for every $\phi \in \mathcal{D}(W)$, the sequence $\langle T_n,\phi \rangle $ converges to $\langle T,\phi\rangle$ in $\mathbf{C}$.  
 \end{definition}
 Usually, it is useful to consider the Dirac distribution as the limit of a sequence of integrable functions. The following proposition gives such a sequence (\cite{Al-Gwaiz}, pp 48-49):

\begin{proposition}
For any nonnegative integrable function $f$ on $\mathbf{R}^n$ such that \\ $\int f(x) dx = 1$, the family$(f_\epsilon), \epsilon > 0$ defined by
$$f_\epsilon (x) = \frac{1}{\epsilon^n}f\left(\frac{x}{\epsilon}\right)$$ converges to $\delta$ in $\mathcal{D}'(R^n)$ as $\epsilon \to 0$.
\end{proposition}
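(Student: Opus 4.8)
The plan is to verify the defining condition for weak convergence in $\mathcal{D}'(\mathbf{R}^n)$: namely, to show that for every test function $\phi \in \mathcal{D}(\mathbf{R}^n)$ the numbers $\langle f_\epsilon, \phi\rangle$ converge to $\langle \delta, \phi\rangle = \phi(0)$ as $\epsilon \to 0$.

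First I would unwind the pairing and remove the scaling from $f$ by a change of variables. By definition of the distribution induced by a locally integrable function,
$$\langle f_\epsilon, \phi\rangle = \int \frac{1}{\epsilon^n}\, f(x/\epsilon)\, \phi(x)\, dx,$$
and substituting $y = x/\epsilon$ (so that $dx = \epsilon^n\, dy$ and the factors $\epsilon^{-n}$ and $\epsilon^n$ cancel) gives the cleaner identity
$$\langle f_\epsilon, \phi\rangle = \int f(y)\, \phi(\epsilon y)\, dy.$$
This is the crucial step: the singular concentration of mass, awkward when it sits on $f_\epsilon$, is transferred onto the smooth, well-behaved factor $\phi(\epsilon y)$.

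Next I would use the normalization $\int f(y)\, dy = 1$ to write $\phi(0) = \int f(y)\, \phi(0)\, dy$, so that the quantity to be estimated becomes a single integral of a difference,
$$\langle f_\epsilon, \phi\rangle - \phi(0) = \int f(y)\,\bigl(\phi(\epsilon y) - \phi(0)\bigr)\, dy.$$
I would then pass to the limit by dominated convergence. Because $\phi$ is continuous we have $\phi(\epsilon y) \to \phi(0)$ for each fixed $y$ as $\epsilon \to 0$, so the integrand tends to zero pointwise; because $\phi$ has compact support it is bounded, say by $M$, which furnishes the integrable dominating bound $\vert f(y)(\phi(\epsilon y) - \phi(0))\vert \le 2M f(y)$. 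The dominated convergence theorem then forces the integral, and hence $\langle f_\epsilon, \phi\rangle - \phi(0)$, to tend to $0$.

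The argument is essentially routine, and I do not expect a serious obstacle. The only point demanding care is the justification of the interchange of limit and integral: this is precisely where the integrability of $f$ (supplying the dominating function) and the fact that a test function is both continuous and compactly supported (supplying pointwise convergence and boundedness) enter, and no deeper input is required.
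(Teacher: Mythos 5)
Your argument is correct and complete. Note, however, that the paper itself gives no proof of this proposition --- it is quoted directly from Al-Gwaiz (pp.\ 48--49) --- so there is no in-text proof to compare against; the closest analogue is the paper's own proof of the later proposition on approximate identities, which proceeds by uniform continuity of the test object and a splitting of the integral rather than by dominated convergence. Your route is the cleaner one for this particular statement: after the substitution $y = x/\epsilon$ the mass sits entirely on the fixed integrable function $f$, and since $f$ is \emph{not} assumed compactly supported, a splitting argument would have to treat the tail $\int_{\vert y\vert > R} f(y)\,dy$ separately (choosing $R$ so the tail is small, then using continuity of $\phi$ at $0$ on $\vert \epsilon y\vert \le \epsilon R$), whereas dominated convergence with the majorant $2Mf$ absorbs both difficulties at once. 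The only cosmetic point worth making explicit is that dominated convergence is applied along an arbitrary sequence $\epsilon_m \to 0$, which suffices to conclude the limit in $\epsilon$.
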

An example is given by 
$$f_\epsilon (x) = \frac{1}{(\pi \epsilon)^{-n/2}} e^{-\vert x \vert ^2/\epsilon}.$$
We know turn to a specific class of distributions called, tempered distributions, on which it is possible to extend Fourier transform. \\
\begin{definition}
A function $\phi \in C^\infty(\mathbf{R})$ is said to be rapidly decreasing if
$$\sup_{x \in \mathbf{R}^n} \vert x^\alpha \partial ^\beta \phi(x) \vert < \infty $$ for all multi-indices $\alpha$ and $\beta$. This is equivalent to 
$$\sup_{\vert \beta \vert \leq m} \sup_{x \in \mathbf{R}^n} \left(1 + \vert x \vert ^2\right)^m \vert \partial ^\beta \phi(x) \vert < \infty,$$  for all integers $m \geq 1$.
\end{definition}
The space of rapidly decreasing functions on $\mathbf{R}^n$ is denoted $\mathcal{S}(\mathbf{R}^n)$. It is a linear topological space where the topology is defined by the family of semi-norms $p_{\alpha,\beta}$, $\alpha, \, \beta \in \mathbf{N}_0^n$ such that
$$p_{\alpha,\beta}(\phi) = \sup_{x \in \mathbf{R}^n} \vert x^\alpha \partial ^\beta \phi(x) \vert,\,\, \,\phi \in \mathcal{S}(\mathbf{R}^n).$$
The topological space $\mathcal{D}(\mathbf{R}^n)$ is a dense subspace of $\mathcal{S}(\mathbf{R}^n)$. 
\begin{definition}
A tempered distribution is a continuous linear function on $\mathcal{S}(\mathbf{R}^n)$. 
\end{definition} 
Their set is denoted $\mathcal{S}'(\mathbf{R}^n)$. Clearly, $\mathcal{S}'(\mathbf{R}^n) \subset \mathcal{D}'(\mathbf{R}^n)$.  
 \begin{example} \label{exemple_temepp}
Many properties of fractal geometry are based on the following function defined on $\mathbf{R}^n - \{0\}$ by:
$$k(x) = \frac{1}{\vert x \vert ^\alpha}, \mbox{ for some } 0 \leq \alpha < n.$$ 
It is very useful to see that $k$ defines a tempered distribution ($k$ can be extended at at $0$ by taking for example $k(0) = \infty$). 
 \end{example}
In deed, for any $\phi \in \mathcal{S}(\mathbf{R}^n)$, consider as usual,
$$\langle f, \phi \rangle = \int k(x) \phi(x) dx.$$ It is sufficient to show that $\vert \langle f, \phi \rangle \vert $ is finite. We have, for some real $A >0$, that,
\begin{eqnarray*}
\vert \langle f, \phi \rangle \vert &\leq &\int \vert k(x) \phi(x)\vert dx \\
                                    & = & \int_{\vert x \vert \geq A}  k(x) \vert \phi(x)\vert dx + \int_{\vert x \vert < A}  k(x) \vert \phi(x)\vert dx 
\end{eqnarray*}
For $\vert x \vert \geq A$,  $k(x) \leq 1/A^\alpha$ and for $\vert x \vert < A$, there exists $M > 0$ such that 
$\vert \phi(x) \vert \leq M$ (since $\phi$ is bounded). Therefore,
$$\vert \langle f, \phi \rangle \vert \leq \frac{1}{A^\alpha} \int_{\vert x \vert \geq A} \vert \phi(x)\vert dx + M \int_{\vert x \vert < A}  k(x) dx.$$
It remains to show that $\int_{\vert x \vert \geq A} \vert \phi(x)\vert dx < \infty$. Since $\phi \in \mathcal{S}(\mathbf{R}^n)$, we have that 
$$\sup_{x \in \mathbf{R}^n} (1 + \vert x \vert ^2)^n \vert \phi(x) \vert < \infty$$ and then
$$\vert \phi(x) \vert = (1 + \vert x \vert ^2)^{-n} (1 + \vert x \vert ^2)^n \vert \phi(x) \vert \leq H (1 + \vert x \vert ^2)^{-n}, \mbox{ for some } H > 0.$$
Therefore
$$\int \vert \phi(x) \vert dx \leq H \int \frac{dx}{(1 + \vert x \vert ^2)^n} < \infty.$$ 

We have that $\mathcal{S}(\mathbf{R}^n) \subset L^{1}(\mathbf{R}^n)$ and it is true in general that 
$\mathcal{S}(\mathbf{R}^n) \subset L^{p}(\mathbf{R}^n)$ for any $1 \leq p \leq \infty$. Also, $L^{p}(\mathcal{R}^n) \subset \mathcal{S}'(\mathbf{R}^n)$, (\cite{Al-Gwaiz}, p 122).\\

Then the Fourier transform of any $\phi \in \mathcal{S}(\mathbf{R}^n)$ exists. One of the most important properties of the space 
$\mathcal{S}(\mathbf{R}^n)$ is that the Fourier transform defines a continuous linear operator in $\mathcal{S}(\mathbf{R}^n)$ and the Fourier inversion formula holds, that is: 
$$\phi \in \mathcal{S}(\mathbf{R}^n) \Longrightarrow \hat \phi \in \mathcal{S}(\mathbf{R}^n) \mbox{ and if } \phi_n \to \phi \mbox{ then } \hat \phi_n \to \hat \phi$$
and for any $ x\in \mathbf{R}^N$, $$\phi(x) = \frac{1}{(2\pi)^n}\int e^{-iux} \hat \phi(u)du, \,\, \phi \in \mathcal{S}(\mathbf{R}^n).$$
(See for example, Theorems 4.3 and 4.4 in \cite{Al-Gwaiz}, p 124--125).\\
We are now ready to extend the Fourier transform operator on tempered distributions.
\begin{definition} \label{Fou_trans_distr}
For any $T \in \mathcal{S}'(\mathbf{R}^n)$, the Fourier transform $\hat T$ of $T$ is defined by
$$\langle \hat T, \phi \rangle = \langle T, \hat \phi \rangle.$$
\end{definition}
If $\phi \in  \mathcal{S}(\mathbf{R}^n)$, then $\phi$ defines a tempered distribution $T_\phi$ whose Fourier transform is denoted $\hat T_\phi$. The Fourier transform $\hat \phi$ also defines a tempered distribution since it is an element of $\mathcal{S}(\mathbf{R}^n)$. Let us denote it by $T_{\hat \phi}$. Then we have that $\hat T_\phi = T_{\hat \phi}$. 

Let us now find the Fourier transform of the tempered distribution defined by the function $k(x) = 1/x^\alpha.$
\begin{proposition}
For any $0 \leq \alpha < n$, the Fourier transform of the tempered distribution defined by the function 
$k(x) = \frac{1}{\vert x \vert^\alpha}$ is the tempered distribution defined by the function
\begin{eqnarray} \label{Fou_kernel}
\hat k(u) = \frac{\pi^{n/2} 2^{\alpha + n} \Gamma(\alpha/2 + n/2)}{\Gamma(\alpha/2)} \vert u \vert ^{\alpha - n},\,\, u \in \mathbf{R}^n,
\end{eqnarray}
where $$\Gamma (z) = \int_0^\infty t^{z-1} e^{-t}dt$$ is the gamma function.
 \end{proposition}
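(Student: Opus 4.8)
The plan is to reduce everything to the one Fourier transform already known explicitly, namely that of a Gaussian in (\ref{Four_trand_Gauss}), by writing $|x|^{-\alpha}$ as a continuous superposition of Gaussians. The device is the elementary Gamma-function identity
$$\frac{1}{|x|^\alpha} = \frac{1}{\Gamma(\alpha/2)}\int_0^\infty s^{\alpha/2 - 1}\, e^{-s|x|^2}\,ds ,$$
valid for $\alpha > 0$ and obtained from the definition of $\Gamma$ by the substitution $t = s|x|^2$. (The case $\alpha = 0$ is trivial, since then $k \equiv 1$ and $\hat k$ is a multiple of $\delta$, and I would dispose of it separately.) Since $|x|^{-\alpha}$ is not integrable it must be handled as a tempered distribution, so I would work through the duality of Definition \ref{Fou_trans_distr}: for every $\phi \in \mathcal{S}(\mathbf{R}^n)$,
$$\langle \hat k, \phi\rangle = \langle k, \hat\phi\rangle = \int \frac{\hat\phi(x)}{|x|^\alpha}\,dx .$$

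Next I would insert the Gamma representation and interchange the order of integration, so that the inner $x$-integral becomes $\int \hat\phi(x)\, e^{-s|x|^2}\,dx$. To this I apply the elementary multiplication formula $\int \hat\phi\, g = \int \phi\, \hat g$ (which is just Fubini, and which already underlies (\ref{parseval_formula})) together with the Gaussian transform (\ref{Four_trand_Gauss}), rewriting it as $(\pi/s)^{n/2}\int \phi(u)\, e^{-|u|^2/4s}\,du$. After a second interchange of integration this collapses to
$$\langle \hat k, \phi\rangle = \frac{\pi^{n/2}}{\Gamma(\alpha/2)}\int \phi(u)\left(\int_0^\infty s^{(\alpha-n)/2 - 1}\, e^{-|u|^2/4s}\,ds\right)du .$$
The inner $s$-integral is then evaluated by the substitution $t = |u|^2/4s$, which turns it back into a Gamma integral: it produces a factor $\Gamma\!\big((n-\alpha)/2\big)$ together with a power of $|u|^2/4$, so that the bracket is a constant multiple of $|u|^{\alpha - n}$. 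Reading off that constant identifies $\hat k$ with the function in (\ref{Fou_kernel}).

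The main obstacle is not the algebra but the analytic bookkeeping around the two interchanges of integration, and in particular verifying where the hypothesis $0 < \alpha < n$ is used. I would justify Fubini by observing that the integrands are controlled by $s^{\alpha/2-1}e^{-s|x|^2}\ge 0$ and by the rapid decay of $\hat\phi, \phi \in \mathcal{S}$, so that the iterated integrals converge absolutely; and I would check that $\int_0^\infty s^{(\alpha-n)/2-1}e^{-|u|^2/4s}\,ds$ converges precisely because the exponent $(n-\alpha)/2$ of the resulting Gamma factor is positive, i.e.\ exactly because $\alpha < n$ (convergence near $s=0$ is automatic from the Gaussian, while convergence near $s=\infty$ forces $\alpha < n$). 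Because this paper aims at getting the constant right, the delicate point is tracking the precise scalar — the powers of $\pi$ and $2$ and the correct Gamma argument emerging from the substitution — so I would fix it independently by the homogeneity shortcut: $k$ is radial and homogeneous of degree $-\alpha$, whence $\hat k = c\,|u|^{\alpha-n}$ for some constant $c$, and $c$ can then be pinned down by evaluating both sides against a single convenient Gaussian test function.
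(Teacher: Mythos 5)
Your proposal follows essentially the same route as the paper's own proof (itself adapted from Strichartz): the subordination identity $|x|^{-\alpha} = \Gamma(\alpha/2)^{-1}\int_0^\infty s^{\alpha/2-1}e^{-s|x|^2}\,ds$, duality $\langle \hat k,\phi\rangle = \langle k,\hat\phi\rangle$, two applications of Fubini, the Gaussian transform, and the substitution $s = |u|^2/4h$; your extra homogeneity check on the constant is a sensible addition but not a different method. One caveat: the constant this computation actually yields is $\pi^{n/2}2^{n-\alpha}\Gamma((n-\alpha)/2)/\Gamma(\alpha/2)$, which is what the paper's proof also arrives at, so your final step of ``reading off'' the constant in the displayed statement would reveal that the exponent $2^{\alpha+n}$ and the argument $\Gamma(\alpha/2+n/2)$ printed there are misstated rather than confirm them.
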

\begin{proof} The following proof is adapted from the book by Strichartz (\cite{Strichartz}, pp 50--51).
We start by considering the integral
$$I = \int_0^\infty s^{\frac{\alpha}{2} - 1} e^{-s \vert x \vert^2}ds $$
With the variable change $h = s \vert x \vert ^2$, we find, that, 
$$I = \frac{1}{\vert x \vert ^\alpha} \int_0^\infty s^{\frac{\alpha}{2} - 1} e^{-s}ds = \frac{\Gamma(\alpha/2)}{\vert x \vert ^\alpha}.$$
Therefore we have the identity
 $$\frac{1}{\vert x \vert ^\alpha} = \frac{1}{\Gamma(\alpha/2)} \int_0^\infty s^{\frac{\alpha}{2} - 1} e^{-s \vert x \vert^2}ds, \,\, x \ne 0.$$
 For any $\phi \in \mathcal{S}(\mathbf{R}^n)$, we have that,
 \begin{eqnarray*}
\langle \hat k, \phi \rangle &= & \langle k, \hat\phi \rangle\\
                             & = &\int_{\mathbf{R}^n} k(x) \hat \phi(x) dx \\
                             & = & \frac{1}{\Gamma(\alpha/2)} \int_{\mathbf{R}^n} \left( \int_0^\infty s^{\frac{\alpha}{2} - 1} e^{-s \vert x \vert^2}ds \right) \hat \phi(x) dx \\
                             & = & \frac{1}{\Gamma(\alpha/2)} \int_{\mathbf{R}^n} \left( \int_0^\infty s^{\frac{\alpha}{2} - 1} e^{-s \vert x \vert^2}ds \right) \int_{\mathbf{R}^n} e^{ixz}\phi(z)dz dx\\
                             & = & \frac{1}{\Gamma(\alpha/2)} \int_{\mathbf{R}^n} \int_0^\infty \left(\int_{\mathbf{R}^n} e^{ixz}e^{-s \vert x \vert^2}dx\right)s^{\frac{\alpha}{2} - 1} \phi(z) ds dz \mbox{  by Fubini's theorem}\\
                             & = &  \frac{1}{\Gamma(\alpha/2)} \int_{\mathbf{R}^n} \int_0^\infty (\pi/s)^{n/2} e^{-\vert z \vert^2/4s}  s^{\frac{\alpha}{2} - 1} \phi(z) ds dz\,\, \mbox{ (from relation } (\ref{Four_trand_Gauss}))\\        & = & \frac{\pi^{n/2}}{\Gamma(\alpha/2)} \int_{\mathbf{R}^n} \left( \int_0^\infty s^{\frac{\alpha}{2} - \frac{n}{2} -1} e^{-\vert z \vert^2/4s} \phi(z) ds \right) dz \\
& = &   \frac{\pi^{n/2} 2^{n - \alpha}}{\Gamma(\alpha/2)} \int_{\mathbf{R}^n} \left( \int_0^\infty \vert z \vert^{\alpha -n} e^{-h} h^{-\frac{\alpha}{2} + \frac{n}{2} - 1} dh \right) \phi(z) dz \mbox{ (by taking }  s = \vert z \vert ^2/4h) \\ 
& = & \frac{\pi^{n/2} 2^{n - \alpha} \Gamma((n-\alpha)/2)}{\Gamma(\alpha/2)} \int_{\mathbf{R}^n}  \vert z \vert ^{\alpha-n} \phi(z) dz\\
& = & c(\alpha, n)\langle g, \phi \rangle        
\end{eqnarray*}
where 
\begin{eqnarray*}
c(\alpha, n) &=& \frac{\pi^{n/2} 2^{n - \alpha} \Gamma((n-\alpha)/2)}{\Gamma(\alpha/2)} \,\, \mbox{ and }\\
g(z) &=& \vert z \vert ^{\alpha -n}.
\end{eqnarray*}
It follows that $\hat k$ is the tempered distribution defined by the function $c(\alpha, n)g$. \qed
\end{proof}

\section{Fourier transform of measures}
We will need the notion of Fourier transform of Radon measures of compact support. 
\begin{definition}
The Fourier transform of a Radon measure $\mu$ on $\mathbf{R}^n$ of compact support is the function defined by
$$\hat \mu(u) = \int e^{iux}d\mu(x), \, \, u \in \mathbf{R}^n.$$
\end{definition}
Since $\mu(\mathbf{R}^n) < \infty$, then $\hat \mu$ is a bounded uniformly continuous function. In the sequel, it will be useful to approximate measures  by convolution products. We consider the following definitions: 
\begin{definition}
Let $f$ and $g$ be real functions on $\mathbf{R}^n$ and $\mu$ be a Radon measure of compact support on $\mathbf{R}^n$. The convolutions $f*g$ of $f$ and $g$, and $f*\mu$ of $f$ and $\mu$ are defined by
\begin{eqnarray*}
f*g(x) &=& \int f(x-y)g(y)dy \\
f*\mu(x) &=& \int f(x-y)d\mu(y)
\end{eqnarray*}
provided the integral exists.
\end{definition} \label{conv_prodd}
Clearly, we have the following:
\begin{eqnarray}
 \widehat{f*g} = \hat f \hat g\, \textrm{ and } \widehat{f*\mu} = \hat f \hat \mu
\end{eqnarray}
provided the involved integrals exist.
\begin{definition} \label{definition_approximate_identity}
An approximate identity $(\phi_\epsilon)_{\epsilon > 0}$ is a family of nonnegative continuous functions on $\mathbf{R}^n$ such that the support of each $\phi_\epsilon$ is contained in the ball $B(\epsilon)$ of centre $0$ and radius $\epsilon$ and $\int \psi_\epsilon dx = 1.$
\end{definition}
Such families are usually constructed by taking a continuous function $f: \mathbf{R}^n \to [0, \infty)$ such that its support is contained in $B(1)$ and $\int f(x)dx =1$ and consider 
$$ \phi(x) = \epsilon^{-n} f(x/\epsilon), \,\, \epsilon > 0.$$
We have the following proposition (\cite{Mattila}, p 20):
\begin{proposition} \label{limit_conv}
Let $(\phi_\epsilon)$ be an approximate identity.
\begin{enumerate}
\item[$(1)$] If $\mu$ is a compactly supported Radon measure defined on $\mathbf{R}^n$, then the family of functions
$\phi_{\epsilon} * \mu$ converges weakly to $\mu$ as $\epsilon$ tends to $0$, in the sense that
$$\lim_{\epsilon \to 0} \int f(x)(\phi_\epsilon * \mu)(x)dx = \int f(x) d\mu(x)$$ for any uniformly continuous bounded function $f$ defined on $\mathbf{R}^n$.
\item[$(2)$] If $g$ is a bounded function defined on an open subset $W$ of $\mathbf{R}^n$ and continuous at $x$, then 
$$\lim_{\epsilon \to 0} g*\phi_\epsilon(x) = g(x).$$
\end{enumerate}
\end{proposition}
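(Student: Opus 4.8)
The plan is to treat the two parts separately, in each case reducing the assertion to the three defining properties of an approximate identity: $\phi_\epsilon \geq 0$, $\int \phi_\epsilon\,dx = 1$, and $\mathrm{supp}(\phi_\epsilon) \subseteq B(\epsilon)$. I will use throughout that $\mu$ is finite (so $\mu(\mathbf{R}^n) < \infty$) and has compact support.

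For part (1), I would first unfold the definition $\phi_\epsilon * \mu(x) = \int \phi_\epsilon(x-y)\,d\mu(y)$ and substitute it into the left-hand integral, obtaining the double integral $\int\!\int f(x)\phi_\epsilon(x-y)\,d\mu(y)\,dx$. The first step is to justify interchanging the order of integration by Fubini's theorem: since $f$ is bounded and $\mu$ is a finite measure, the absolute double integral is at most $\|f\|_\infty\,\mu(\mathbf{R}^n) < \infty$, so Fubini applies. After swapping the integrals and performing the translation $x \mapsto y+z$ in the inner $dx$-integral, the expression becomes $\int \left( \int f(y+z)\phi_\epsilon(z)\,dz \right) d\mu(y)$.

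The heart of the argument is then to show that the inner integral tends to $f(y)$ uniformly in $y$. Using $\int \phi_\epsilon\,dz = 1$, I would write $\int f(y+z)\phi_\epsilon(z)\,dz - f(y) = \int (f(y+z) - f(y))\phi_\epsilon(z)\,dz$ and bound its modulus by $\int |f(y+z) - f(y)|\,\phi_\epsilon(z)\,dz$. Because $\mathrm{supp}(\phi_\epsilon) \subseteq B(\epsilon)$, the uniform continuity of $f$ provides, for each $\eta > 0$, a $\delta > 0$ so that this quantity is at most $\eta$ as soon as $\epsilon < \delta$, and crucially the estimate is independent of $y$. Integrating this uniform bound against $\mu$ yields $\left| \int f\,(\phi_\epsilon * \mu)\,dx - \int f\,d\mu \right| \leq \eta\,\mu(\mathbf{R}^n)$, which proves the convergence. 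For part (2), I would proceed analogously but using only continuity at the single point $x$. Since $W$ is open, there is an $r>0$ with the ball $B(x,r)$ contained in $W$, so for $\epsilon < r$ the convolution $g*\phi_\epsilon(x) = \int g(x-y)\phi_\epsilon(y)\,dy$ is well defined (the integrand being supported where $x-y \in W$, with $g$ bounded there). Then, again invoking $\int \phi_\epsilon\,dy = 1$, I would write $g*\phi_\epsilon(x) - g(x) = \int (g(x-y) - g(x))\phi_\epsilon(y)\,dy$ and control the integrand on $B(\epsilon)$ by continuity of $g$ at $x$.

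The main obstacle I anticipate lies entirely in part (1): verifying the Fubini hypotheses cleanly and, above all, extracting a bound on $|f(y+z) - f(y)|$ that is uniform in $y$. This is precisely where the hypothesis that $f$ is \emph{uniformly} continuous, rather than merely continuous, is indispensable, since $\mu$ may be spread over a whole compact set and only a $y$-independent estimate survives integration against it. In part (2), by contrast, pointwise continuity at $x$ is enough because there is no integration over the base point; the only delicate point there is ensuring that $x-y$ remains inside $W$ for all small $\epsilon$, which the openness of $W$ guarantees.
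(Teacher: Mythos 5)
Your proposal is correct and follows essentially the same route as the paper's own proof: in both parts you use $\int\phi_\epsilon=1$ to rewrite the difference as an integral of $f(y+z)-f(y)$ (resp.\ $g(x-y)-g(x)$) against $\phi_\epsilon$, apply Fubini and a translation in part (1), and then invoke uniform continuity (resp.\ continuity at $x$) together with the support condition $\mathrm{supp}(\phi_\epsilon)\subseteq B(\epsilon)$ to get the bound $\eta\,\mu(\mathbf{R}^n)$. Your explicit verification of the Fubini hypotheses and of the well-definedness of $g*\phi_\epsilon(x)$ for small $\epsilon$ are minor refinements of the same argument, not a different approach.
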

\begin{proof}
 (1) Because $\int \psi_\epsilon dx = 1,$ we have that 
 \begin{eqnarray*}
&&\int f(x)(\phi_\epsilon * \mu)(x)dx - \int f(x)d\mu(x) \\
 &=& \int f(x)\left( \int \phi_\epsilon(x - y) d\mu(y)\right) dx - \int f(y)d\mu(y) \int \phi_\epsilon(x) dx \\
 & = & \int \left( \int f(x) \phi_\epsilon(x - y) dx \right) d\mu(y) - \int\left( \int f(y)\phi_\epsilon(x) dx \right) d\mu(y)\, \mbox{ (by Fubini's theorem)}\\
 & = & \int \left( \int f(h+y) \phi_\epsilon(h) dh \right) d\mu(y) - \int\left( \int f(y)\phi_\epsilon(x) dx \right) d\mu(y)\, \,\mbox{ (by taking } h = x - y)\\
& = &  \int \left( \int (f(x+y) - f(y)) \phi_\epsilon(x) dx \right) d\mu(y).
\end{eqnarray*}
Since $f$ is uniformly continuous and bounded, then for any $\gamma>0$, there exists $\delta > 0$ such that for any $h, y$ with $\vert h-y\vert < \delta$ we have
$\vert f(h) - f(y) \vert < \gamma$. Then by taking  $\epsilon >0$ sufficiently small such that $\epsilon < \delta$, we have that for any $x \in B(\epsilon)$, and any $y \in \mathbf{R}^n$, $\vert (x+ y)- y \vert < \delta$ and hence $\vert f(x+y) - f(y) \vert < \gamma$. Using the fact $B(\epsilon)$ contains the support of  $\phi_\epsilon$, one finds that
\begin{eqnarray*}
\left\vert \int f(x)(\phi_\epsilon * \mu)(x)dx - \int f(x)d\mu(x)\right\vert& =& \left\vert\int \left( \int_{B(\epsilon)} (f(x+y) - f(y)) \phi_\epsilon(x) dx \right) d\mu(y) \right\vert\\
& \leq& \int \int \gamma \phi_\epsilon(x) dx d\mu(y) \\
& = & \gamma \mu(\mathbf{R}^n).
\end{eqnarray*}
Since $\mu(\mathbf{R}^n) < \infty$, we conclude that 
$$\lim_{\epsilon \to 0} \int f(x)(\phi_\epsilon * \mu)(x)dx - \int f(x)d\mu(x) = 0.$$
(2) As previously, we write 
$$g * \phi_\epsilon (x) - g(x) = \int_{B(\epsilon)} (g(x-t) - g(x)) \phi_\epsilon (t) dt.$$
Since $g$ is continuous at $x$, for any $\gamma > 0$, there exists $\delta >0$ such that 
$\vert g(y) - g(x) \vert < \gamma$ holds for any $y$ such that $\vert y -x \vert < \delta$. By taking $y = x - t$ and $\epsilon < \delta$, we find $\vert g(x-t) - g(x) \vert < \gamma$, for any $\vert t \vert < \epsilon$. It follows that $$\vert g * \phi_\epsilon (x) - g(x) \vert \leq \gamma \int \phi_\epsilon (t) dt = \gamma$$
and hence $\lim_{\epsilon \to 0} g*\phi_\epsilon(x) = g(x).$ \qed
\end{proof}

\section{Fourier transform and Capacities}
Consider a compact subset $E$ of $\mathbf{R}^n$. The energy integrals
$$I_\alpha(\mu) = \int\int k(x-y)d\mu(x) d\mu(y)= \int\int \frac{d\mu(x) d\mu(y)}{\vert x-y\vert^\alpha}$$ are very useful in the calculation of the Hausdorff dimension of $E$. \\
The following theorem which relates $I_\alpha(\mu)$ to $\hat\mu$ is an important result of fractal geometry (\cite{Carleson}, pp 22-23), (\cite{Mattila}, pp 162-163).  
\begin{theorem} \label{Impooth}
For any Radon measure $\mu$ on $\mathbf{R}^n$ with compact support, and any \\
 $0 \leq \alpha < n$, 
\begin{eqnarray} \label{capac_four_formula_2}
 I_\alpha(\mu) = \frac{1}{(2\pi)^n}\int \hat k(u) \vert \hat \mu(u) \vert ^2 du,
\end{eqnarray}
 where $\hat k$ is given by relation $(\ref{Fou_kernel})$.
\end{theorem}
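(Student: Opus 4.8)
The plan is to bypass the delicate weak-convergence argument through approximate identities (which, as noted, is where Mattila's account leaves gaps) and instead exploit the Gaussian subordination identity already obtained in the proof of the previous proposition, namely
$$\frac{1}{\vert x \vert ^\alpha} = \frac{1}{\Gamma(\alpha/2)} \int_0^\infty s^{\frac{\alpha}{2} - 1} e^{-s \vert x \vert^2}\, ds.$$
The decisive feature is that every quantity appearing below is nonnegative, so Tonelli's theorem applies unconditionally and the identity will hold \emph{simultaneously} whether $I_\alpha(\mu)$ is finite or infinite; no a priori finite-energy assumption is needed. I would first restrict to $0 < \alpha < n$ (the value $\alpha = 0$ is degenerate, since $\Gamma(\alpha/2)$ then blows up). Substituting the subordination identity into the definition of $I_\alpha(\mu)$ and using Tonelli to interchange $\iint d\mu(x)\,d\mu(y)$ with $\int_0^\infty ds$ gives
$$I_\alpha(\mu) = \frac{1}{\Gamma(\alpha/2)}\int_0^\infty s^{\frac{\alpha}{2}-1}\left(\iint e^{-s\vert x-y\vert^2}\,d\mu(x)\,d\mu(y)\right)ds.$$

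The second step is to diagonalize the Gaussian kernel by Fourier inversion. For each fixed $s>0$, relations $(\ref{inverse_four_trans})$ and $(\ref{Four_trand_Gauss})$ give $e^{-s\vert x-y\vert^2} = (2\pi)^{-n}(\pi/s)^{n/2}\int e^{-iu(x-y)}e^{-\vert u\vert^2/4s}\,du$. Since the modulus of the integrand is the integrable Gaussian $(\pi/s)^{n/2}e^{-\vert u\vert^2/4s}$ and $\mu$ is finite, Fubini's theorem allows me to carry $d\mu(x)\,d\mu(y)$ inside, producing the factor $\int e^{-iux}d\mu(x)\int e^{iuy}d\mu(y) = \overline{\hat\mu(u)}\,\hat\mu(u) = \vert\hat\mu(u)\vert^2$. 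Hence
$$\iint e^{-s\vert x-y\vert^2}\,d\mu(x)\,d\mu(y) = \frac{(\pi/s)^{n/2}}{(2\pi)^n}\int e^{-\vert u\vert^2/4s}\,\vert\hat\mu(u)\vert^2\,du.$$

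Reinserting this yields an iterated integral over $(0,\infty)\times\mathbf{R}^n$ of the nonnegative function $s^{\alpha/2-n/2-1}\pi^{n/2}e^{-\vert u\vert^2/4s}\vert\hat\mu(u)\vert^2$, so a further application of Tonelli interchanges $ds$ and $du$. It then remains to evaluate the inner integral $\int_0^\infty s^{\alpha/2-n/2-1}e^{-\vert u\vert^2/4s}\,ds$; the change of variable $s = \vert u\vert^2/4h$ (the same device used in the previous proposition) turns it into $2^{n-\alpha}\Gamma((n-\alpha)/2)\vert u\vert^{\alpha-n}$. Collecting the constants reproduces precisely the factor $c(\alpha,n)=\frac{\pi^{n/2}2^{n-\alpha}\Gamma((n-\alpha)/2)}{\Gamma(\alpha/2)}$ found earlier, so that the bracket becomes $\hat k(u)$ of $(\ref{Fou_kernel})$ and one arrives at $I_\alpha(\mu) = (2\pi)^{-n}\int \hat k(u)\vert\hat\mu(u)\vert^2\,du$, as claimed.

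The main obstacle is bookkeeping the measure-theoretic justifications rather than any deep idea: I must verify at each interchange that the integrand is jointly measurable and either nonnegative (for the two Tonelli steps) or absolutely integrable (for the Fubini step in the middle). The one genuinely delicate point is the diagonal $\{x=y\}$: if $\mu$ has atoms then $(\mu\times\mu)(\{x=y\})>0$ and $k(x-y)=+\infty$ there, yet the subordination identity persists as an equality in $[0,+\infty]$ (both sides being $+\infty$ when $x=y$, since $\int_0^\infty s^{\alpha/2-1}ds$ diverges), so Tonelli remains valid and both sides of the theorem are simultaneously $+\infty$. This is exactly where I would spend the most care, and it is also the reason I favour the subordination route over the approximate-identity route, whose limiting arguments must track finiteness separately.
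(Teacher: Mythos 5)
Your proof is correct, but it follows a genuinely different route from the one in the paper. The paper mollifies the kernel, writing $k*\psi_\epsilon$ with $\psi_\epsilon=\phi_\epsilon*\phi_\epsilon$, obtains $\int\!\!\int k*\psi_\epsilon(x-y)\,d\mu(x)d\mu(y)=(2\pi)^{-n}\int\hat k(t)\vert\hat\phi_\epsilon(t)\vert^2\vert\hat\mu(t)\vert^2dt$ via the distributional definition of $\hat k$, and then must control the limit $\epsilon\to 0$ from both sides: Fatou's lemma for the lower bound on the energy, and a delicate decomposition $J_1+J_2$ (with the Taylor estimate $(1-\beta)^{-\alpha}\leq 1+2\beta$, which as written is only justified for $\alpha<1$) for the upper bound. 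You instead feed the subordination identity $\vert x\vert^{-\alpha}=\Gamma(\alpha/2)^{-1}\int_0^\infty s^{\alpha/2-1}e^{-s\vert x\vert^2}ds$ directly into the double integral and diagonalize each Gaussian by Fourier inversion, so that the entire argument reduces to two applications of Tonelli (nonnegative integrands) and one of Fubini (justified by $\int(\pi/s)^{n/2}e^{-\vert u\vert^2/4s}du=(2\pi)^n$ and $\mu(\mathbf{R}^n)<\infty$); all your interchanges check out, the inner integral $\int_0^\infty s^{\alpha/2-n/2-1}e^{-\vert u\vert^2/4s}ds=2^{n-\alpha}\Gamma((n-\alpha)/2)\vert u\vert^{\alpha-n}$ is computed correctly, and the constant you collect agrees with the $c(\alpha,n)$ derived in the proof of the paper's Proposition on $\hat k$ (note that the displayed formula $(\ref{Fou_kernel})$ itself carries a typo, $2^{\alpha+n}\Gamma(\alpha/2+n/2)$ instead of $2^{n-\alpha}\Gamma((n-\alpha)/2)$, so you match the correct value). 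What your route buys is substantial: the identity is obtained as an equality in $[0,+\infty]$ with no limiting process, no approximate identities, no finiteness hypothesis, and no hidden restriction on $\alpha$ beyond $0<\alpha<n$; you are also right that $\alpha=0$ must be excluded, since there $\hat k$ degenerates to a multiple of $\delta$ and the paper's formula does not literally apply. What the paper's route buys is that it works directly with $\hat k$ as a tempered distribution and with the convolution machinery it sets up in Sections 3--4, which the author wants for later use; your argument quietly re-derives the kernel constant rather than invoking the distributional identity, but since the paper computes that constant by exactly the same subordination trick, nothing is lost.
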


\begin{proof} 
We approximate $k$ by the convolution product of $k$ by an approximate identity. Consider an approximate identity $(\phi_\epsilon)$ defined by 
$\phi_\epsilon(x) = \epsilon ^{-n} f(x/\epsilon)$ where $f$ is a $C^\infty$-function whose support is the ball $B(1/2)$ of radius $1/2$ and centre $0$ and such that $\int f(x)dx = 1$. It is clear that $\psi_\epsilon = \phi_\epsilon * \phi_\epsilon$ is also an approximate identity and we have that $\psi(x) = \epsilon^{-n} f*f(x/\epsilon)$. We will also assume that $f(x) = f(-x)$ and that the Fourier transform $\hat f$ is a nonnegative function. \\
For any $x \ne 0$, the function $k$ is continuous at $x$ and from Proposition \ref{limit_conv}, \\$\lim_{\epsilon \to 0} k*\psi_\epsilon (x) = k(x)$. This is also true for $x = 0$ if we take $k(0) = \infty$.  Then, by Fatou's lemma,
\begin{eqnarray*}
I_\alpha(\mu) = \int\int k(x-y) d\mu(x) d\mu(y) &\leq & \liminf_{\epsilon \to 0}\int \int k*\psi_\epsilon (x-y) d\mu(x) d\mu(y). \end{eqnarray*}
(1) First we want to show that 
\begin{eqnarray} \label{firsteqq}
\liminf_{\epsilon \to 0}\int \int k*\psi_\epsilon (x-y) d\mu(x) d\mu(y) \leq \frac{1}{(2\pi)^n}\int \hat k(u) \vert \hat \mu(u) \vert ^2 du.\end{eqnarray}
In order to make use of Fubini's theorem to compute these integrals, we need to show that $$\int \int k*\psi_\epsilon (x-y) d\mu(x) d\mu(y) < \infty.$$  We have that,
 for any $z \ne 0$ in $\mathbf{R}^n$, if $\frac{\vert z \vert}{2} > \epsilon$ then for any $u \in B(\epsilon)$,
\begin{eqnarray} \label{eqn_tempor}
 \frac{1}{\vert z - u\vert^\alpha} \leq \frac{2^{-\alpha}}{\vert z \vert ^{\alpha}}.
 \end{eqnarray}
Indeed, the function  $ u \to \vert z - u\vert^\alpha$ attains its minimum at 
$u = \epsilon z /\vert z \vert$ and hence 
$$\sup_{u\in B(\epsilon)}\frac{\vert z \vert ^{\alpha}}{\vert z - u \vert^\alpha} = \frac{1}{(1 - \epsilon/\vert z \vert)^\alpha} \leq 2^{-\alpha}.$$
Now for $x-y \ne 0$, and $\vert x- y\vert/2 > \epsilon$, we find
$$k*\psi_\epsilon (x-y) = \int_{B(\epsilon)} k(x-y-u)\psi_\epsilon(u) du \leq 2^{-\alpha}k(x-y)\int_{B(\epsilon)}\psi_\epsilon(u) du  = 2^{-\alpha} k(x-y).$$ 
For $\vert x-y \vert/2 \leq \epsilon$, we have that
\begin{eqnarray*}
k*\psi_\epsilon (x-y) &=& \int_{B(\epsilon)} k(x-y-u)\psi_\epsilon(u) du \\
&\leq & H \int_{B(\epsilon)} k(x-y-u)du  \mbox{ (where }H = \sup \psi_\epsilon)\\
&\leq & H \int_{B(3\epsilon)} \frac{dt}{\vert t\vert^\alpha} \mbox{ (since }\vert x-y\vert \leq 2 \epsilon)\,\,(t = x- y -u)\\
& = & C(\epsilon).
\end{eqnarray*}
Therefore $k*\psi_\epsilon (x-y)$ is bounded by a constant depending only on $\epsilon$. Now it follows that 
$$\int \int k*\psi_\epsilon (x-y) d\mu(x) d\mu(y) \leq 2^{-\alpha} \int \int k(x-y) d\mu(x) d\mu(y) + C(\epsilon) A  < \infty$$ 
where $A = (\mu(\mathbf{R}^n))^2$.\\

(2) Now we have that,
\begin{eqnarray*}
k*\psi_\epsilon (x-y) & = & (k * \phi_\epsilon) * \phi_\epsilon(x-y)\\
                     & = &  \int k*\phi_\epsilon (x - y -h) \phi_\epsilon (h)dh \\
                     & = & \int k*\phi_\epsilon (h- x + y)) \phi_\epsilon (h)dh \mbox{ by symmetry of } k \mbox{ and } \phi_\epsilon \\
                     &= & \int k*\phi_\epsilon (z + y) \phi_\epsilon (z+x) dz \mbox{ by taking }z = h-x \\
                     & = & \int \left(\int k(t) \phi_\epsilon (z+y - t) dt \right) \phi_\epsilon (z+ x) dz.
\end{eqnarray*}
Using Fubini's theorem, we find
\begin{eqnarray*}
&& \int\int k*\psi_\epsilon (x-y) d\mu(x) d\mu(y) = \\
&&\int \int k(t) \left(\int \phi_\epsilon (z+y - t)d\mu(y)\right) \left( \int \phi_\epsilon (z+ x) d\mu(x)\right) dt dz.
 \end{eqnarray*}
The first inner integral is
\begin{eqnarray*}
\int \phi_\epsilon (z+y - t)d\mu(y) & = & \int \phi_\epsilon (t - z-y)d\mu(y) \mbox{ (by symmetry  of } \phi_\epsilon) \\
                                    & = & \phi_\epsilon * \mu(t - z)
\end{eqnarray*}
and the second is 
\begin{eqnarray*}
\int \phi_\epsilon (z+ x) d\mu(x) = \int \phi_\epsilon (z- x) d\tilde \mu(x) = \phi_\epsilon * \tilde \mu(z)
\end{eqnarray*}
where $\tilde \mu$ is the Radon measure defined by $\int g(x) d \tilde \mu(x) = \int g(-x) d\mu(x)$ for any continuous function $g$ on $\mathbf{R}^n$ with compact support.
Therefore, 
\begin{eqnarray*}
\int\int k*\psi_\epsilon (x-y) d\mu(x) d\mu(y) &=& \int k(t)\int \phi_\epsilon * \mu(t - z) \phi_\epsilon * \tilde \mu(z) dz dt \\
& = & \int k(t) (\phi_\epsilon * \mu)*(\phi_\epsilon * \tilde \mu)(t) dt.
\end{eqnarray*}
(3) We can now pass to Fourier transforms by using Definition \ref{Fou_trans_distr}. By letting 
$$(\phi_\epsilon * \mu )*( \phi_\epsilon * \tilde \mu) = \hat H_\epsilon, \,\, 
H_\epsilon \in \mathcal{S}(\mathbf{R}^n)$$ we have, by the inversion formula, that
\begin{eqnarray*}
\overline{H_\epsilon} & = &\frac{1}{(2\pi)^n} \left(\overline{(\phi_\epsilon * \mu )* (\phi_\epsilon * \tilde \mu)} \right) \hat{} \\
                     & = & \frac{1}{(2\pi)^n} \left(\phi_\epsilon * \mu * \phi_\epsilon * \tilde \mu \right) \hat{} \mbox{ (since }\phi \mbox{ is a real function)} \\
& = &  \frac{1}{(2\pi)^n} (\hat\phi_\epsilon \times \hat\mu \times \hat \phi_\epsilon \times \hat{\tilde \mu}) \mbox{ from equation (\ref{conv_prodd}) }\\
 & = & \frac{1}{(2\pi)^n} (\vert \hat\phi_\epsilon  \vert ^2 \vert \hat \mu \vert^2 ) \mbox{ since }\hat{\tilde \mu} = \overline{\hat \mu} \\
\end{eqnarray*}             
Therefore, 
\begin{eqnarray*}
\int \int k*\psi_\epsilon (x-y) d\mu(x) d\mu(y) &=& \int k(t)\int \phi_\epsilon * \mu(t - z) \phi_\epsilon * \tilde \mu(z) dz dt \\
& = & \int k(t) \hat H_\epsilon(t) dt \\
& = & \int \hat k(t) H_\epsilon(t) dt \mbox{ (by Definition \ref{Fou_trans_distr})}\\
& = & \frac{1}{(2\pi)^n} \int \hat k(t) \vert \hat\phi_\epsilon (t) \vert ^2 \vert \hat \mu (t)\vert^2 dt \\
\end{eqnarray*}
Since for $\epsilon \to 0$, $\hat \phi_\epsilon (t) = \hat f(t\epsilon )\to 1 $ (the Fourier transform of the Dirac distribution $\delta$), it follows that 
$$\liminf_{\epsilon \to 0} \int \hat k(t) \vert \hat\phi_\epsilon (t) \vert ^2 \vert \hat \mu (t)\vert^2 dt  = \frac{1}{(2\pi)^n} \int \hat k(t) \vert \hat \mu (t)\vert^2 dt.$$
Therefore 
$$I_\alpha(\mu) \leq \frac{1}{(2\pi)^n} \int \hat k(t) \vert \hat \mu (t)\vert^2 dt.$$
(4) Let us now show that 
\begin{eqnarray} 
I_\alpha(\mu) \geq \frac{1}{(2\pi)^n} \int \hat k(t) \vert \hat \mu (t)\vert^2 dt.
\end{eqnarray}
For this end it is sufficient to show that  \begin{eqnarray} \label{eqnimporttt}
\limsup_{\epsilon \to 0}\int k*\psi_\epsilon (x-y) d\mu(x) d\mu(y) \leq I_\alpha(\mu).
\end{eqnarray}
For $x-y \ne 0$, 
\begin{eqnarray*}
k*\psi_\epsilon (x-y)& =& \epsilon^{-n} \int k(x-y-h) f*f(h/\epsilon)dh \\
&=&  \int k(x-y-\epsilon h) f*f(h)dh \, (\mbox{ variable change}).\\
\end{eqnarray*}
A useful observation is that for any  $z, u \in \mathbf{R}^n$ such that $\vert z \vert = 1$ and $\vert u \vert <1/2$, one has that 
\begin{eqnarray} \label{ineq_impo}
\frac{1}{\vert z - u \vert^\alpha} \leq 1 + 2\vert u \vert.
\end{eqnarray}
Indeed, it is clear that among the $u$'s such that $\vert u \vert = \beta < 1/2$, the one which maximizes the function $1/\vert z - u \vert^\alpha$ is $u = \beta z$ (for fixed $z$). In that case,
$$\frac{1}{\vert z - u \vert^\alpha} = \frac{1}{\vert z \vert^\alpha (1 - \beta)^\alpha} = \frac{1}{(1 - \beta)^\alpha}.$$
The Taylor expansion of 
\begin{eqnarray*}
(1-\beta)^{-\alpha}  &=& 1 + \beta\left( \alpha + \frac{\alpha(\alpha +1)}{2!}\beta + \frac{\alpha(\alpha +1)(\alpha +2)}{3!}\beta^2 + \cdots\right)\\
&\leq &  1 + 2 \beta \mbox{ ( since }\alpha < 1, \,\beta < 1/2).
\end{eqnarray*}
Now we write 
$$\int \int k*\psi_\epsilon (x-y) d\mu(x) d\mu(y) = J_1 + J_2 $$
where 
\begin{eqnarray*}
J_1 &=& \int \int \int_{\{h\in B(1):\sqrt{\epsilon} \vert h \vert \leq \vert x - y \vert\}} k(x-y-\epsilon h) f*f(h)dh d\mu(x) d\mu(y); \\
J_2 &= &\int \int \int_{\{h\in B(1):\sqrt{\epsilon} \vert h \vert > \vert x - y \vert\}}k(x-y-\epsilon h) f*f(h)dh d\mu(x) d\mu(y).
\end{eqnarray*}
We compute $J_1$ by noting that if $0< \epsilon < 1/4$, $\vert h \vert < 1$ and $\sqrt{\epsilon} \vert h \vert \leq \vert x - y \vert$ then 
$$\frac{\epsilon \vert h \vert}{\vert x- y \vert} \leq \sqrt{\epsilon} \leq \frac{1}{2}.$$ Now we have that 
\begin{eqnarray*}
k(x- y - \epsilon h) & = & \frac{1}{\vert x-y \vert^\alpha} \frac{1}{\left\vert\frac{x- y}{\vert x- y \vert} - \frac{\epsilon h}{\vert x- y \vert}\right\vert^\alpha}\\
                    & \leq & \frac{1}{\vert x-y \vert^\alpha} \left(1 + \frac{2\epsilon \vert h \vert}{\vert x- y \vert}\right), \mbox{ by relation }(\ref{ineq_impo}),\\
                    & \leq & \frac{1}{\vert x-y \vert^\alpha}(1 + 2\sqrt{\epsilon})
\end{eqnarray*}
Then 
$$J_1 \leq (1 + 2\sqrt{\epsilon}) I_\alpha(\mu) \, \mbox{ since }\int f*f(h)dh  = 1.$$
To compute $J_2$ we note that for fixed $x$ and $y$ and  for $ h\in B(1)$ with $\sqrt{\epsilon} \vert h \vert > \vert x - y \vert\}$, the function  $1/\vert x-y-\epsilon h\vert^\alpha$ is maximum for $h = (x - y)/\sqrt{\epsilon}$. 
(In general for $u, v \in \mathbf{R}^n,\, \vert u - v \vert$ is minimal for $u = H v$ for some  positive constant $H$.)\\
Therefore, for $h\in B(1)$ and $\sqrt{\epsilon} \vert h \vert > \vert x - y \vert,$ 
$$ \frac{1}{\vert x-y-\epsilon h\vert^\alpha} \leq \frac{1}{(1 - \sqrt{\epsilon})^\alpha\vert x-y \vert^\alpha} =  \frac{c}{\vert x-y \vert^\alpha}.$$ 
and hence
\begin{eqnarray*}
J_2 &\leq &c \int \int \int_{\{h\in B(1):\sqrt{\epsilon} \vert h \vert > \vert x - y \vert\}}\frac{1}{\vert x-y \vert^\alpha} f*f(h)dh d\mu(x) d\mu(y)\\
&\leq & c \int_{y\in \mathbf{R}^n} \int_{h \in B(1)} \int_{\{x:\sqrt{\epsilon} > \vert x - y \vert\}}\frac{1}{\vert x-y \vert^\alpha} f*f(h) d\mu(x) dhd\mu(y) \mbox{ (Fubini's theorem)} \\
&\leq & c \int_{y\in \mathbf{R}^n} \int_{\{x:\sqrt{\epsilon} > \vert x - y \vert\}}\frac{1}{\vert x-y \vert^\alpha} d\mu(x) d\mu(y)\mbox{ (since }\int_{B(1)}f*f(h)dh = 1)
\end{eqnarray*}
It follows that
$J_1 \to I_\alpha(\mu)$ and $J_2 \to 0$ as $\epsilon \to 0$ and relation (\ref{eqnimporttt}) is proven. \qed
\end{proof}

\paragraph{Acknowlegdment} This paper is based on my PhD project and I thank my promotors W. L. Fouch\'e  and P. H. Potgieter for their guidance.


\begin{thebibliography}{99}
 \bibitem{Al-Gwaiz} Al-Gwaiz, M.A. \emph{Theory of distribution.} Marcel Dekker, 1992.
 
 \bibitem{Carleson} Carleson, L. \emph{Selected problems on exceptional sets}. Van Nostrand, 1967.
 
 \bibitem{Frostman} Frostman, O. \emph{Potentiel d'\'equilibre et capacit\'e des ensembles}. PhD thesis, Lund, 1935.
 
 \bibitem{Kahane} Kahane, J-P. \emph{Some random series of functions}. Cambridge University Press, 2nd ed., 1985.
 
 \bibitem{Mattila} Mattila, P. \emph{Geometry of sets and measures in Euclidean spaces}. Cambridge University Press, 1995.
 
 \bibitem{Rudin} Rudin, W.  \emph{Real and Complex Analysis}. McGraw-Hill, 1987.
 
 \bibitem{Strichartz} Strichartz, R. \emph{A guide to distribution theory and Fourier transform}. CRC Press, 1994.
\end{thebibliography}
\end{document}